\numberwithin{equation}{section}        
\newtheoremstyle{fancy}{}{}{\itshape}{}{\textsc\bgroup}{.\egroup}{ }{}
\theoremstyle{fancy}
\newtheorem{corollary}[equation]{Corollary}     
\newtheorem{lemma}[equation]{Lemma}
\newtheorem{proposition}[equation]{Proposition}
\newtheorem{theorem}[equation]{Theorem}
\newtheorem{problem}[equation]{Problem}
\newtheorem*{conjecture}{Conjecture}
\theoremstyle{definition}
\theoremstyle{remark}
\newtheorem{example}[equation]{Example}
\newtheorem{remark}[equation]{Remark}
\newcommand{\cref}[1]{Corollary~\ref{#1}}
\newcommand{\no}{\noindent}
\newcommand{\fg}{{\mathfrak{g}}}
\newcommand{\fh}{{\mathfrak{h}}}
\newcommand{\cS}{{}_cS }
\newcommand{\cM}{{}_cM }
\newcommand{\cG}{{}_cG }
\newcommand{\rM}{{}^r\!M }
\newcommand{\resM}{{}_cM \times _{{}_cG} G/H }
\def\con#1=#2(#3){#1 \equiv #2 \bmod{#3}}
\begin{document}

\title{Global $G$-Manifold Reductions and Resolutions}

\dedicatory{Dedicated to the memory of Alfred Gray}

\author{Karsten Grove$^{*}$}
\address{University of Maryland\\
 College Park , MD 20742}
\email{kng@math.umd.edu}
\author{Catherine Searle$^{**}$}
\address{Instituto de Matematicas - UNAM\\
Cuernavaca, Mexico}
\email{csearle@matcuer.unam.mx}
\thanks{$^*$ Supported in part by a grant from the National
Science Foundation and by the Danish National Research Council.}
\thanks{$^{**}$ Supported in part by CONaCYT grant no, 28491E.}

\begin{abstract}
The purpose of this note is to exhibit some simple and basic constructions for smooth compact transformation groups, and some of their most immediate applications to geometry. 
\end{abstract}

\maketitle

It is well known that, if
$G$ is a compact Lie group acting smoothly on a manifold $M$ with only one orbit type,
then the orbit space
$M/G$ is a manifold, and the orbit map $\pi : M \to M/G$ is a locally trivial bundle
with fiber $G/H$, the typical $G$-orbit in $M$. Moreover, the normalizer
$N(H)$ acts on the fixed point set, $M^H \subset M$ of $H$  in $M$, with $H$ as the
ineffective kernel, and $M^H \to M^H/N(H) = M/G$ is a principal
$N(H)/H$-bundle. Its associated $G/H$-bundle is $\pi$. In particular, if
we set $\cM = M^H$ and $\cG = N(H)/H$ then $M = \resM$. This shows that we
can recover $M$ as a $G$-manifold completely from the $\cG$-manifold $\cM$ and
$H
\subset G$. We will refer to $\cM$ as the \textit{core} of $(M,G)$ and to
$\cG$ as its \textit{core group}. 

In general, when the action $G \times M \to M$ has more than one orbit type
no such simplification exists. It turns out, however, that if $H$ is a
principal isotropy group, then the closure, $\cM$ of the core ${}_c(M_o) =
(M_o)^H$ of the regular part $M_o$ of $M$ (all principal orbits) is a smooth
$\cG$-manifold which contains considerable information about $(M,G)$ (see Proposition \ref{p.7}
and Proposition \ref{p.9}). This was already indicated in \cite{BH} and used in \cite{S}. The
unpublished manuscript, \cite{SS} by T. Skjeldbred and E. Straume is devoted
to the basic investigation of this core, $\cM$ of $(M,G)$ referred to as the
\textit{reduction} by them. Given its importance, we have included complete
proofs of this basic material. Our proofs are different from those of
\cite{SS}, in that we use Riemannian geometric tools from the outset.

We use the core to obtain restrictions on positively curved $G$-manifolds. In
particular, we obtain an extension of a fixed point lemma (cf. Corollary \ref{c:3}), used in our
systematic investigation of symmetry groups of positively curved manifolds
in \cite{GS}.

As another application of the core, we associate to any smooth $G$-manifold,
$M$ with $G$ a compact Lie group another $G$-manifold $\rM$ and a smooth
surjective $G$-map $f\!:\rM \to M$. The orbit space of $\rM$ is the same as
that of $M$, but $\rM$ is less singular than $M$ in the sense that
corresponding orbits have smaller isotropy groups (Theorem \ref{2.5}). For this
reason, we think of $\rM$ as a (partial) \textit {resolution} of $M$. In
contrast to other regularizations of group actions, e.g., \textit {blow-up}
along invariant submanifolds as in \cite{Was}, our construction is completely global in
nature. In terms of the core, $\rM = \resM$ fibers over $G/N(H)$ with fiber $G/H$. - A
geometric feature of the resolution construction is that it preserved the class of
manifolds with non-negative curvature (2.10). In particular, new manifolds of
non-negative curvature may possible be constructed by this method. We also point out
natural problems and conjectures related to the constructions in this note.

\section{The core of a $G$-manifold}

Since we only consider smooth compact transformation groups, throughout we
may as well assume that each transformation is an isometry relative to a
fixed (auxiliary) Riemannian metric.

We first recall some well known facts (cf. e.g. \cite{Br}) and establish
notation. - Throughout, $M$ will denote a closed, connected Riemannian
manifold, and $G$ a compact Lie group which acts isometrically and
effectively on $M$. For $p \in M$, $G_p = \{g \in G \mid gp = p\}$ is the
\textit{isotropy group} of $G$ at $p$, and $Gp = \{gp \mid g \in G\} \simeq
G/G_p$ is the orbit through $p$. For any (closed) subgroup $L \subset G$,
$M^L = \{p \in M \mid Lp = p\}$ will denote the fixed point set of $L$ in $M$.
$M^L$ is a finite union of closed totally geodesic submanifolds of $M$.

We endow the orbit space, $M/G$ with the so-called \textit{orbital metric}, i.e., if
$\pi: M \to M/G$ is the quotient map, then the distance between $\pi(p)$ and
$\pi(q)$ is the Riemannian distance in $M$ between the orbits $Gp$ and $Gq$.
With this metric, $\pi$ is a \textit{submetry}, i.e., for any $p \in M$ and
any $r>0$, the $r$-ball around $p$, $B(p,r)$ is mapped onto the $r$-ball,
$B(\pi(p),r)$ around $\pi(p)$. - On the regular part $M_o \subset M$
consisting of all principal orbits, the restriction $\pi:M_o \to M_o/G$ is a
Riemannian submersion and a locally trivial bundle map with fiber $G/H$,
where $H = G_{p_o}$ for some $p_o \in M_o$. If the principal isotropy type
$(H)$ is trivial, the bundle $\pi:M_o \to M_o/G$ is a principal $G$-bundle.

Fix a principal isotropy group $H$, and assume from now on that $H \ne
\{1\}$. The normalizer, $N(H)$ of $H$ in $G$ clearly acts on $M_o^H = M_o
\cap M^H$ with $H$ as ineffective kernel. Moreover, each principal
$G$-orbit, $G/H$ intersects $M_o^H$ in an $N(H)$-orbit, $(G/H)^H \simeq
N(H)/H =: \cG$, and the inclusion $M_o^H\subset M_o$ induces an isometry
$M_o^H/\cG \simeq M_o/G$. The induced action of the \textit{core group}, $\cG$
on $M_o^H$ is free, and
\begin{equation*}
\tag{1.1}
M_o^H \to M_o^H/\cG \simeq M_o/G
\end{equation*}

\no is the principal bundle for $M_o \to M_o/G$. Indeed, the $G$-action
\begin{equation*}
\tag{1.2}
G \times M_o^H \times G/H \to M_o^H \times G/H ,   (g,(x,[g'])) \to (x, [gg'])
\end{equation*}
\no where $[g] = gH$, commutes with the $\cG$-action
\begin{equation*}
\tag{1.3}
M_o^H \times G/H \times \cG \to M_o^H \times G/H,   ((x, [g']), [n]) \to
(n^{-1}x,[g'n])
\end{equation*}
\no where $[n] = nH \in N(H)/H = \cG$. Moreover, the $G$-map
\begin{equation*}
\tag{1.4}
M_o^H \times G/H \to M_o ,   (x,[g']) \to g'x
\end{equation*}
\no induces a $G$-diffeomorphism
\begin{equation*}
\tag{1.5}
M_o^H \times _{{}_cG}G/H := (M_o^H \times G/H)/{}_cG \to M_o
\end{equation*}
\no identifying $M_o \to M_o/G$ with the $G/H$-bundle associated with the
principal bundle (1.1).

The construction of the core of $M$ (called the reduction of $M$ in \cite
{SS}) and later of the (partial) resolution of $M$, are natural extensions
of the well known facts outlined above. 

We refer to the closure, $\cM$ := cl$(M_o^H)$ of $M_o^H$ in $M$, as the
\textit{core} of $M$. Clearly, each of the sets  $M_o^H \subset \cM \subset
M^H$ are invariant under the $\cG$-action, and in general each inclusion is
strict (cf. e.g. \cite {SS}, for an example where $\cM \ne M^H)$. Our first
objective is to analyze the structure of $\cM$.

The following possibly well known simple but very useful fact can be found in
Kleiners thesis \cite{K}.

\begin{lemma}
\label{l.6}
Let $c:[0,\ell] \to M$ be a minimal geodesic between the orbits $Gc(0)$ and
$Gc(\ell)$. Then $G_{c(t)} = G_{c}$ for all $t\in(0,\ell)$ and $G_{c(0)}
 \supset G_{c} \subset G_{c(\ell)}$.
\end{lemma}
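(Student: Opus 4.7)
The plan is to establish the equality $G_{c(t)} = G_c$ for each interior $t$, where $G_c := \bigcap_{s\in[0,\ell]}G_{c(s)}$ denotes the subgroup fixing every point of the image of $c$. One inclusion $G_c \subset G_{c(t)}$ is automatic from the definition, so I would fix $t_0 \in (0,\ell)$ together with an arbitrary $g \in G_{c(t_0)}$ and show that $g\cdot c(s) = c(s)$ for every $s \in [0,\ell]$.

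The key step is to build a competitor curve out of $g$. Set
\[
\tilde c(s) = \begin{cases} g\cdot c(s), & s \in [0,t_0],\\ c(s), & s \in [t_0,\ell]. \end{cases}
\]
Since $g$ is an isometry fixing $c(t_0)$, the two pieces match at $s=t_0$ and $\tilde c$ has total length $\ell$. Moreover $\tilde c(0) = g\cdot c(0) \in Gc(0)$ and $\tilde c(\ell) = c(\ell) \in Gc(\ell)$, so $\tilde c$ is a curve of length $\ell = d(Gc(0),Gc(\ell))$ joining the two orbits; in particular it realizes that distance.

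A distance-realizing curve between two submanifolds must actually be a smooth geodesic, for otherwise a first-variation argument at a corner would produce a strictly shorter connecting curve. Applied at $s = t_0$, this forces the one-sided velocity vectors of $\tilde c$ to agree, i.e.\ $dg_{c(t_0)}\bigl(c'(t_0)\bigr) = c'(t_0)$. Combined with $g\cdot c(t_0) = c(t_0)$, the uniqueness of geodesics with prescribed initial position and velocity gives $g\circ c = c$ on all of $[0,\ell]$, so $g$ fixes every $c(s)$ and therefore $g \in G_c$. The containments $G_c \subset G_{c(0)}$ and $G_c \subset G_{c(\ell)}$ are then immediate from the definition of $G_c$.

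The only point requiring care is the regularity statement invoked above: a shortest curve between two smooth submanifolds (here two $G$-orbits) is an unbroken geodesic meeting both submanifolds orthogonally. This is the main structural input in the argument, but it is entirely standard, so there is no serious obstacle.
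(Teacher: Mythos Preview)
The paper does not actually prove this lemma; it merely attributes the statement to Kleiner's thesis and proceeds to use it. So there is no proof in the paper to compare your argument against.

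That said, your argument is correct and is the standard one. The broken-curve competitor $\tilde c$ and the observation that a length-minimizer cannot have a corner is exactly the right mechanism, and from $dg_{c(t_0)}(c'(t_0)) = c'(t_0)$ together with $g\cdot c(t_0)=c(t_0)$ the identity $g\circ c = c$ follows by uniqueness of geodesics, as you say. One minor simplification: you do not need the orthogonality of $c$ to the orbits at all. Since $\tilde c$ has length $\ell = d(Gc(0),Gc(\ell)) \le d(\tilde c(0),\tilde c(\ell)) \le \ell$, the curve $\tilde c$ is already a minimizing geodesic between its own \emph{endpoints}, so the no-corner argument at $t_0$ is just the usual one for point-to-point minimizers, with no appeal to the endpoint submanifolds.
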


This together with the slice theorem can be used to give simple geometric
proofs of all basic facts about compact transformation groups. Here we will
use it in the proof of

\begin{proposition}
\label{p.7}
The core $\cM$ is a smooth submanifold of $M$. In fact, $\cM$ is the disjoint
union of those components, $F$ of $M^H$ such that $F \cap M_o \ne \emptyset$.
(For these componets, dim$F$ = dim$\cG$ + dim$M/G$.)
\end{proposition}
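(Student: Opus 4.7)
The plan is to reduce the proposition to a local density statement using the slice theorem. Let $F_1, \dots, F_k$ denote the (finitely many) connected components of $M^H$; each is a closed totally geodesic submanifold of $M$ and is clopen in $M^H$. The statement then follows from three observations: (a) if $F_i \cap M_o = \emptyset$ then $F_i \cap \cM = \emptyset$; (b) if $F_i$ meets $M_o$ then $F_i \cap M_o^H$ is open and dense in $F_i$, whence $F_i \subset \cM$; and (c) in case (b), $\dim F_i = \dim \cG + \dim M/G$. Observation (a) is purely topological: $F_i$ is clopen in $M^H$ and disjoint from $M_o^H$, hence also disjoint from $\operatorname{cl}(M_o^H) = \cM$. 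Observation (c) follows from (1.1): as a principal $\cG$-bundle over $M_o/G \simeq M/G$, one has $\dim M_o^H = \dim \cG + \dim M/G$, and $F_i \cap M_o^H$ is a non-empty open subset of the connected manifold $F_i$.

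The heart of the argument is the density in (b). Fix $p \in F_i$, and apply the slice theorem: a tubular neighborhood of $Gp$ is $G$-equivariantly identified with $G \times_{G_p} S_p$, where $S_p$ is a linear $G_p$-representation; note $H \subset G_p$ since $p \in M^H$. In this model a neighborhood of $p$ in $M^H$ is given by $S_p^H$, translated along the $\cG$-orbit of $p$, whose tangent directions $(T_p Gp)^H$ fill the remaining directions of $T_p M^H$. Thus density near $p$ in $F_i$ reduces to density of $\{s \in S_p^H : G_s = H\}$ in $S_p^H$. For any $s \in S_p^H$ one has $H \subset (G_p)_s = G_s$; if in addition $G_s$ is conjugate in $G$ to $H$, then $G_s = H$, since in a compact Lie group a subgroup conjugate to $H$ and containing $H$ must equal $H$. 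The set of $s \in S_p$ with $(G_p)_s$ conjugate in $G$ to $H$ is open and dense in $S_p$ (these are the principal $G$-orbit points in the tubular neighborhood); a standard slice-theoretic argument, arranging $H$ as a principal isotropy of the linear $G_p$-action on $S_p$, shows that this set remains dense after restriction to $S_p^H$.

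The main obstacle is the last density assertion for $S_p^H$, which rests on choosing a representative $H$ of the principal $G$-isotropy conjugacy class so that $H$ is simultaneously a principal isotropy of the linear $G_p$-representation on $S_p$. This is a delicate but standard consequence of the slice theorem (cf.\ \cite{Br}); granting it, the density becomes the principal-orbit statement for the linear $N_{G_p}(H)/H$-action on $S_p^H$, and the rest of the proof is routine. Alternatively, one could approach the density globally by applying Lemma \ref{l.6} to minimal geodesics from $p$ to nearby principal orbits and then translating into $M^H$ by a group element, but tracking components of $M^H$ under this translation is less direct than the slice-based argument.
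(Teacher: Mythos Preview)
Your route is genuinely different from the paper's. The paper never invokes the slice theorem here; instead it fixes $x\in F$, takes $y\in\operatorname{cl}(F\cap M_o^H)$ closest to $x$ \emph{within} $F$, and uses \lref{l.6} to exhibit an open set of unit vectors $u\in T_yF$ along which $\exp(tu)\in F\cap M_o^H$ for all small $t\neq 0$. If $x\neq y$, a minimal geodesic in $F$ from $y$ to $x$ makes angle $<\pi/2$ with one of these directions, contradicting the choice of $y$. This is short and uses only \lref{l.6} and the totally geodesic structure of $F$; no analysis of the slice representation enters, and nothing downstream (\lref{l:8}, \pref{p.9}) is touched.

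Your slice-theoretic reduction is the representation-theoretic alternative the paper attributes to \cite{SS}, but as written it has a real gap. The last step asserts that density of $\{s\in S_p^H:(G_p)_s=H\}$ in $S_p^H$ ``becomes the principal-orbit statement for the linear $N_{G_p}(H)/H$-action on $S_p^H$''. The principal-orbit theorem only gives density of the points with \emph{trivial $N_{G_p}(H)/H$-isotropy}, i.e.\ those $s$ with $N_{(G_p)_s}(H)=H$; to deduce $(G_p)_s=H$ from this you need the implication ``$N_K(H)=H\Rightarrow K=H$'' for $K=(G_p)_s$ acting on its normal sphere with principal isotropy $H$. That implication is exactly \lref{l:8}, which in the paper comes \emph{after} \pref{p.7} and whose stated proof uses that the core of a sphere is itself a subsphere---a consequence of \pref{p.7}. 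So your argument is circular unless you supply an independent proof of \lref{l:8} for linear actions (for instance: $v\in (S_o)^H$ implies $-v\in (S_o)^H$ by linearity, so $\operatorname{diam}(S_o)^H=\pi$ directly, without knowing $\cS$ is a subsphere). The other steps you flag as ``delicate but standard''---that $H$ occurs as a principal $G_p$-isotropy on $S_p$ for \emph{every} $p\in F_i$, and that density in $S_p^H$ propagates to a full neighborhood of $p$ in $F_i$ (note $T_pF_i=(T_pGp)^H\oplus S_p^H$, and $(T_pGp)^H$ need not equal $T_p(\cG p)$)---also require more than you have written. The paper's closest-point argument sidesteps all of this machinery.
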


\begin{proof}
Since $(G/H)^H \simeq \cG$ and hence $M_o^H$ has only finitely many
components (all diffeomorphic), the inclusion $\cM \subset \cup F$, $F$
component of $M^H$ with $F \cap M_o^H \ne \emptyset$, is obvious.
Now let $x \in F$, with $F$ as above. Choose $y \in$ cl$(F \cap M_o^H)
\subset \cM$ closest within $F$ to $x$. We will show that $x=y$. First note
that since $F \cap M_o^H$ is open in $F$ and contains components of
$\cG$-orbits in $M_o^H$ arbitrarily close to $y$, we can find unit tangent
vectors $v \in T_yF$ such that by Lemma \ref{l.6}, exp$(tu) \in F \cap M_o^H$
for all $u$ close to $v$ and all small positive $t$, and hence all small $t
\ne 0$. Now suppose $x \ne y$ and let $\gamma$ be a minimal geodesic in $F$
from $y$ to $x$. By assumption, all points of $\gamma$ except $y$ are in
$F-M_o^H$. However, from the above, there are minimal geodesics $c$ in $F$
emanating from $y$ all of whose points except $y$ are in $M_o^H$, and such
that $c$ makes an angle less than $\pi/2$ with $\gamma$. This contradicts the
choice of $y$, and hence $x=y$.
\end{proof}

Our next goal is to determine the regular part of the core action
$\cG\times\cM \to \cM$ and its orbit space. We need the following

\begin{lemma}
\label{l:8}
Suppose $G$ acts isometrically on the unit $n$-sphere, $S$ with principal
isotropy group $H$. Then $N(H)/H \ne\{1\}$.
\end{lemma}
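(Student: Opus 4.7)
The plan is to induct on $n=\dim S$ after extending the action linearly to $V=\R^{n+1}$, so that $G\subset\O(n+1)$ and $V^H\subset V$ is a linear subspace with $S^H=V^H\cap S$ a great subsphere. The bases $n\le 1$ are handled by inspecting the short list of effective isometric actions with $H\ne\{1\}$.

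For the \emph{transitive} case ($S=G/H$), the identifications $(G/H)^H=N(H)/H=\cG$ show that $S^H$ is both a great subsphere and, as a manifold, the Lie group $\cG$. Since $H$ fixes $eH$, one has $V^H\ne 0$. If $\dim V^H\ge 2$, then $\cG=S^H$ is a sphere of positive dimension and hence a positive-dimensional Lie group; if $\dim V^H=1$, then $S^H=\{p,-p\}$ forces $|\cG|=2$. In both cases $\cG\ne\{1\}$.

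For \emph{cohomogeneity} $\ge 1$, the plan is to combine Lemma~\ref{l.6} with the slice theorem to set up the induction. Fix a principal $p$ with $G_p=H$ and let $c\colon[0,\ell]\to S$ be a minimal geodesic from $p$ to its nearest singular orbit $Gq$. By Lemma~\ref{l.6}, $G_{c(t)}$ is constant on the open interval and contained in both $G_p=H$ and $G_q$; since this constant group belongs to the principal orbit type $(H)$ and sits inside $H$, it must equal $H$, so $H\subsetneq G_q$. First variation gives $c'(\ell)\in W:=N_q(Gq)$, hence $c(t)\in\exp_q(W)$ for $t$ just below $\ell$. Consider the orthogonal slice representation of $G_q$ on $W$, which restricts to an isometric action on the unit sphere $S(W)$ of dimension strictly less than $n$, and let $K\triangleleft G_q$ denote its ineffective kernel. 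Because $K$ fixes every point of $\exp_q(W)$, in particular the interior points $c(t)$ with $G_{c(t)}=H$, we obtain $K\subseteq H$.

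Two sub-cases then close the argument. If $K=H$, then $H\triangleleft G_q$ and $N_{G_q}(H)/H=G_q/H\ne\{1\}$ because $G_q\supsetneq H$. If $K\subsetneq H$, apply the inductive hypothesis to the effective isometric action of $G_q/K$ on $S(W)$, whose principal isotropy is $H/K\ne\{1\}$ and whose ambient sphere has dimension $<n$; this delivers a non-trivial element of $N_{G_q/K}(H/K)/(H/K)$, which, via the subgroup correspondence for $K\triangleleft G_q$, is canonically isomorphic to $N_{G_q}(H)/H$. Either way $N(H)/H\supseteq N_{G_q}(H)/H\ne\{1\}$. The chief technical point is the containment $K\subseteq H$: one needs a principal point in the slice at $q$ with isotropy \emph{exactly} $H$ rather than some random conjugate, and this is precisely what Lemma~\ref{l.6} applied to the minimal geodesic out of $p$ provides.
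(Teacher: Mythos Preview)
Your inductive step in the cohomogeneity $\ge 1$ case has a real gap: you choose a minimal geodesic from $p$ to ``its nearest singular orbit $Gq$'' without ever verifying that a non-principal orbit exists. If every $G$-orbit on $S$ were principal (with $H\ne\{1\}$ and positive cohomogeneity), there would be no such $q$, and your induction could not start. Ruling this sub-case out is not a formality; it is essentially the content of the lemma. The most direct way to handle it is exactly the paper's argument: when all orbits are principal and $\cG=\{1\}$, the inclusion $S^H\hookrightarrow S$ induces an isometry $S^H\cong S/G$; but $S^H$ is a great subsphere of dimension equal to the cohomogeneity $\ge 1$, hence of diameter $\pi$, while a center-of-mass (convexity) argument gives $\diam(S/G)\le\pi/2$ once $S^G=\emptyset$ --- and $S^G\ne\emptyset$ would itself supply a non-principal point $q$. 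So as written your proof is incomplete, and the natural patch imports precisely the idea your inductive scheme was meant to replace.

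Apart from this gap, your route is genuinely different from the paper's. The paper gives a single, non-inductive diameter comparison that works uniformly whether or not non-principal orbits are present; you induct on $\dim S$ via the slice representation at a non-principal point. Your transitive case agrees with the paper's, and the slice bookkeeping --- using Lemma~\ref{l.6} to guarantee a principal point in the slice with isotropy exactly $H$, showing $K\subseteq H$, and identifying $N_{G_q/K}(H/K)\big/(H/K)\cong N_{G_q}(H)/H$ --- is correct. The inductive approach is more structural and reusable, but the paper's metric argument is shorter and, crucially, does not need the existence of a non-principal orbit as an input.
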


\begin{proof}
Assume without loss of generality that $S^G = \emptyset$ (otherwise look at
$(S^G)^\perp$). A simple convexity argument shows that in this case diam$S/G
\le \pi/2$. If $H = \{1\}$, there is nothing to show. Now suppose $H \ne
\{1\}$ and $N(H)/H = \cG = \{1\}$. If $G$ acts transitively, $S = G/H$ we have
$S^H
\simeq (G/H)^H \simeq cG$ which is impossible since $S^H$ is a subsphere. If
$G$ does not act transitively, consider $\cS \supset (S_o)^H$ which in this
case is a connected subsphere of $S$. The assumption $\cG = \{1\}$ implies
that dist$(x,y)$ = dist$({}_cGx,{}_cGy)$ = dist$(Gx,Gy)$ for all $x,y \in
(S_o)^H$. Since $(S_o)^H$ is dense in $\cS$, and $(S_o)^H \simeq (S_o)^H/\cG
\simeq S_o/G$ is dense in $S/G$ we get a contradiction from diam$\cS = \pi$ and
diam$S/G \le \pi/2$.
\end{proof}

We are now ready to prove

\begin{proposition}
\label{p.9}
The inclusion $\cM \subset M$ induces an isometry $\cM/\cG \simeq M/G$ and
$(\cM)_o = (M_o)^H$. In particular, $\cG$$x = Gx \cap \cM$ and $(\cG)_x =
{}_c(G_x)$ for all $x \in \cM$.
\end{proposition}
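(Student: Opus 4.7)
The plan is to establish the three assertions of the proposition in reverse order: first the orbit and isotropy identities of the last sentence, then the equality of regular parts, and finally the isometry, which will follow from the first two by density.

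For the orbit identity, $\cG x \subset Gx \cap \cM$ is immediate from $\cG \subset G$ and the $\cG$-invariance of $\cM$. For the reverse, write $y = gx \in Gx \cap \cM$; since $x,y \in \cM = \mathrm{cl}(M_o^H) \subset M^H$, one has $H \subset G_x$ and $H \subset G_y = gG_xg^{-1}$. The slice theorem applies at $x$ and $y$: since $x$ is the limit of points $x_n \in M_o^H$, their slice preimages give $v_n \in S_x$ with $(G_x)_{v_n} = G_{x_n} = H$ exactly, so $H$ is a principal isotropy of the $G_x$-action on $S_x$, and symmetrically for the $G_y$-action on $S_y$. The $G$-equivariant diffeomorphism $g\colon S_x \to S_y$ transports the $G_x$-action onto the $G_y$-action, so $gHg^{-1}$ is also a principal isotropy of $G_y$ on $S_y$. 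By conjugacy of principal isotropies within the acting group, there is $h'' \in G_y$ with $h''Hh''^{-1} = gHg^{-1}$, equivalently $n := g^{-1}h'' \in N(H)$. Writing $h'' = gn$, the relation $h''y = y$ becomes $gn \cdot gx = gx$, hence $ngx = x$, so $y = gx = n^{-1}x \in \cG x$. The isotropy formula is then routine: $(\cG)_x = (N(H) \cap G_x)/H = N_{G_x}(H)/H = {}_c(G_x)$.

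For the regular part, $M_o^H \subset (\cM)_o$ holds because $\cG$ acts freely on $M_o^H$, which is open and dense in $\cM$; so the principal $\cG$-isotropy on $\cM$ is trivial. Conversely, let $x \in (\cM)_o$, so $(\cG)_x = \{1\}$ and by the isotropy formula $N_{G_x}(H) = H$. Apply \lref{l:8} to the $G_x$-action on the unit sphere $\Sigma_x$ of the slice $S_x$ (after passing to $(\Sigma_x^{G_x})^\perp$ if necessary); its principal isotropy is $H$ by the same slice argument as above. Either $G_x$ acts trivially on $\Sigma_x$, forcing $G_x = H$ and $x \in M_o^H$, or the lemma produces $N_{G_x}(H)/H \ne \{1\}$, contradicting $N_{G_x}(H) = H$. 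Either way $x \in M_o^H$.

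For the isometry, the map $\bar\pi\colon \cM/\cG \to M/G$ induced by inclusion is injective by the orbit identity. Surjectivity: for $p \in M$ choose $p_n \in M_o$ with $p_n \to p$ and $g_n \in G$ with $g_n^{-1}G_{p_n}g_n = H$, so $q_n := g_np_n \in M_o^H$; by compactness of $G$, passing to a subsequence $g_n \to g$, whence $q_n \to gp \in Gp \cap \cM$. Thus $\bar\pi$ is a continuous bijection of compact Hausdorff spaces, hence a homeomorphism. On the open dense set $M_o^H/\cG = M_o/G$, the two orbital metrics coincide: at any $x \in M_o^H$, $H = G_x$ acts trivially on the $G$-normal directions (a vector $v \in T_xS_x$ with $(G_x)_v \subsetneq H$ would produce a $G$-isotropy smaller than the principal one, contradicting minimality), so the $\cG$-horizontal and $G$-horizontal subspaces agree and both quotient maps present $M_o/G$ with the same Riemannian metric. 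Both orbital distances are continuous on the respective products, so agreement on this dense subset extends to all of $\cM/\cG = M/G$. The main obstacle is the regular-part identification, where \lref{l:8} applied to the slice sphere is the essential input ruling out $H$ self-normalizing in a strictly larger $G_x$; the remaining steps reduce to slice-theoretic bookkeeping for the orbit identity and routine density/continuity arguments for the isometry.
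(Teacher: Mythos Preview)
Your argument is essentially correct and shares its backbone with the paper's proof: both establish $(\cM)_o = (M_o)^H$ by observing that $(\cG)_x = (N(H)\cap G_x)/H = N_{G_x}(H)/H$ is the core group of the slice representation of $G_x$ and invoking \lref{l:8}, and both obtain the isometry by extending from the already-known identification $(M_o)^H/\cG \simeq M_o/G$ via density. The one genuine difference is the orbit identity $\cG x = Gx\cap\cM$: you prove it \emph{directly} by a conjugacy argument (both $H$ and $gHg^{-1}$ are principal isotropy groups of the slice action of $G_y$, hence $G_y$-conjugate, producing the required $n\in N(H)$), whereas the paper simply reads it off as a consequence of the isometry being a bijection. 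Your route is more hands-on and yields the orbit identity independently; the paper's is shorter.

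Two small imprecisions worth tightening. First, in your slice argument you write that points $x_n\in M_o^H$ near $x$ have ``slice preimages'' $v_n\in S_x$ with $(G_x)_{v_n}=H$ exactly; but $x_n$ need not lie in the normal slice through $x$, only in $g_n\cdot S_x$ for some $g_n\to 1$, so a priori $(G_x)_{v_n}=g_n^{-1}Hg_n$. The conclusion that $H$ itself is a principal isotropy of $G_x$ on $S_x$ is still correct (e.g.\ because conjugacy classes of closed subgroups in a compact Lie group are closed, or by a short geodesic argument using \lref{l.6}); the paper asserts the same fact without proof. Second, your isometry step shows that the \emph{Riemannian} quotient metrics on $M_o/G$ coincide; to pass to the \emph{orbital} distances you implicitly need that a minimal geodesic between two principal orbits stays in $M_o$ (so that the restricted orbital distance equals the intrinsic Riemannian distance on $M_o/G$), which is exactly \lref{l.6}. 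With these two points made explicit, your proof is complete.
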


\begin{proof}
Clearly $(M_o)^H \subset (\cM)_o$. To prove the opposite inclusion let $x \in
\cM - (M_o)^H$. We need to see that $(\cG)_x \ne \{1\}$. However, $(\cG)_x =
(N(H) \cap G_x)/H$, and $G_x$ acts on the normal sphere, $S_x^\perp$ to the
orbit $Gx$ with principal isotropy group $H$. Thus $(\cG)_x$ can also be
viewed as the core group ${}_c(G_x)$ for the $G_x$-action on $S_x^\perp$ and
the claim follows from Lemma \ref{l:8}.
Since $\cM \subset M \to M/G$ is clearly surjective, and$(\cM)_o/\cG =
(M_o)^H/\cG \simeq M_o/G$ is an isometry, the extension $\cM/\cG \to M/G$ is
an isometry as well.
\end{proof}

The following is a simple (and possibly well known) observation based on the
construction above.

\begin{theorem}
\label{t:0}
Let $M$ be a $G$-manifold with principal isotropy group $H$. If the core
group $\cG = N(H)/H$ is trivial, then all orbits are principal and $M$ is
$G$-equivalent to $\cM \times G/H$.
\end{theorem}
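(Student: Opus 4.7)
The plan is to combine Proposition \ref{p.9} with Lemma \ref{l:8}. For any $x\in\cM$, Proposition \ref{p.9} identifies the isotropy $(\cG)_x$ with the core group ${}_c(G_x)$ of the slice action of $G_x$ on the normal sphere $S_x^\perp$. Since $\cG=\{1\}$, this gives ${}_c(G_x)=\{1\}$, and then Lemma \ref{l:8} forces the $G_x$-action on $S_x^\perp$ to be trivial---so $G_x$ pointwise fixes the entire normal sphere at $x$.

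The next step is to feed this slice triviality into the slice theorem: a tube around $Gx$ is $G$-equivariantly diffeomorphic to $G\times_{G_x}V$ for some $G_x$-invariant neighborhood $V$ of $0$ in $T_xM/T_x(Gx)$, and with $G_x$ acting trivially on $V$ every point of the tube has isotropy conjugate to $G_x$. On the other hand $x\in\cM=\overline{M_o^H}$, so the tube meets $M_o^H$, where the isotropy is exactly $H$. Hence $G_x$ is conjugate to $H$; combined with the inclusion $H\subset G_x$ coming from $\cM\subset M^H$, the standard compact-group fact that conjugation cannot shrink a closed subgroup yields $G_x=H$. Thus every $x\in\cM$ is a principal point, and since $\cM$ meets every $G$-orbit by Proposition \ref{p.9}, $M=M_o$.

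Finally, with $M=M_o$ and $\cG=\{1\}$, formula (1.5) collapses---the fiber product over $\cG$ becomes a direct product---giving $M\simeq\cM\times G/H$ as $G$-manifolds, with $G$ acting trivially on the $\cM$-factor and by left translation on $G/H$. The main obstacle I anticipate is the very first implication: extracting genuine triviality of the slice representation from the algebraic identity ${}_c(G_x)=\{1\}$. This is exactly the content of Lemma \ref{l:8}, whose proof handles the escape case $S^G\ne\emptyset$ by passing to $(S^G)^\perp$, so the crux of the argument is really a careful deployment of the machinery already built up rather than any new geometric input.
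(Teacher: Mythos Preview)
Your proof is correct and rests on the same ingredients as the paper's (Proposition~\ref{p.9}, Lemma~\ref{l:8}, and the identification (1.5)); the paper is simply more compressed. Rather than rerunning the Lemma~\ref{l:8}/slice-theorem argument pointwise to force $G_x=H$ for each $x\in\cM$---which is exactly how the paper establishes $(\cM)_o=(M_o)^H$ inside the proof of Proposition~\ref{p.9}---the paper quotes that conclusion directly: $\cG=\{1\}$ gives $(\cM)_o=\cM$ trivially, whence $\cM=(M_o)^H$, and the orbit-space isometry $\cM/\cG\simeq M/G$ from Proposition~\ref{p.9} then reads $M_o/G=M/G$, i.e.\ $M=M_o$, in one line.
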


\begin{proof}
Since $\cG = \{1\}$, $(\cM)_o = \cM$. But then $M/G \simeq \cM/\cG = (\cM)_o/
\cG = (M_o)^H/\cG = M_o/G$, i.e., $M_o = M$. Moreover, $M = M_o \to M_o/G =
M/G$ is a bundle with fiber $G/H$ and trivial principal bundle $M^H = (M_o)^H
\overset{\sim}{\rightarrow} (M_o)^H/\cG = M^H \simeq M/G$.
\end{proof}

This suggests an extension of Lemma \ref{l:8} to manifolds of positive curvature:

\begin{theorem}
\label{t:1}
Let $M$ be a closed manifold of positive curvature and $G$ a compact group of
isometries on $M$. If $H \subset G$ is a principal isotropy group, then
$N(H)/H \ne \{1\}$ unless $M = G/H$. 
\end{theorem}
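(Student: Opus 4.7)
The plan is to argue by contradiction via Theorem \ref{t:0}. Assume $\cG = N(H)/H = \{1\}$. Then Theorem \ref{t:0} gives a $G$-equivariant diffeomorphism $M \cong \cM \times G/H$, with all orbits principal. It suffices to show that $\dim \cM = 0$, which forces $M = G/H$.

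Suppose for contradiction $\dim \cM \geq 1$. Because $\cG = \{1\}$, two translates $g_1 \cM$ and $g_2 \cM$ coincide iff $g_1^{-1}g_2 \in N(H) = H$, and are disjoint otherwise: indeed, any common point $x$ would satisfy $G_x \supseteq \langle H, gHg^{-1}\rangle$, but since all orbits are principal $G_x = H$, forcing $g \in N(H) = H$. Each translate $g\cM$ is a union of components of $M^{gHg^{-1}}$, hence a closed totally geodesic submanifold of $M$. Thus $\{g\cM\}_{gH \in G/H}$ is a family of pairwise disjoint, closed, totally geodesic ``sections'' of $\pi : M \to M/G$, all of dimension $\dim \cM$, which together foliate $M$.

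The contradiction with positive curvature now splits into two sub-cases. When $2\dim\cM \geq \dim M$, Frankel's theorem applied to $\cM$ and any $g\cM$ with $g \notin H$ forces these two closed totally geodesic submanifolds (whose dimensions sum to at least $\dim M$) to intersect, contradicting disjointness. In the remaining case $\dim \cM < \dim G/H$, I would argue in parallel with the proof of Lemma \ref{l:8}. Pick distinct $x, y \in \cM$ and a minimal geodesic $\gamma \colon [0,\ell] \to \cM$ from $x$ to $y$ (which lies in $\cM$ by total geodesy). For $\xi \in \mathfrak{g} \setminus \mathfrak{h}$, the variation $\alpha(s,t) = \exp(s\xi)\gamma(t)$ consists of minimal geodesics of constant length $\ell = d(Gx, Gy)$, with each $\alpha(s,\cdot)$ lying in the disjoint section $\exp(s\xi)\cM$ by Lemma \ref{l.6}. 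Combining this with the vertical Killing Jacobi field $\xi^*|_\gamma$ along $\gamma$ and a Toponogov- or Rauch-type comparison estimate (mirroring the bound $\mathrm{diam}(S/G) \leq \pi/2$ used in the sphere case) should yield the required contradiction from strictly positive sectional curvature.

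The main obstacle is executing this last step rigorously: the naive second variation of $\alpha$ collapses to the trivial identity $0 = 0$, because $\xi^*$ is Killing and all nearby curves in the variation have length exactly $\ell$. One therefore needs a sharper argument, for instance spanning the full vertical space $T_x(Gx)$ by simultaneously varying along a basis of Killing fields in $\mathfrak{g}/\mathfrak{h}$ and exploiting strict positivity of $K$ on the resulting horizontal-vertical mixed 2-planes along $\gamma$.
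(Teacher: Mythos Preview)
Your setup is correct and matches the paper: from $\cG=\{1\}$ and Theorem~\ref{t:0} you get $M\cong\cM\times G/H$ with all orbits principal, and the translates $g\cM$ form a foliation of $M$ by closed totally geodesic submanifolds transverse to the orbits. Your Frankel argument in the case $2\dim\cM\ge\dim M$ is also fine.

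The gap is exactly where you say it is: the case $\dim\cM<\dim G/H$. The second-variation idea with a single Killing field does collapse to $0=0$, and your proposed fix---``spanning the full vertical space by a basis of Killing fields and exploiting positivity on mixed $2$-planes''---is not a proof. The difficulty is that the fibers $G/H$ need not be totally geodesic, so the $T$-tensor of the submersion $M\to\cM$ can be nonzero, and the O'Neill formula for a mixed plane $\operatorname{span}(X,V)$ (horizontal $X$, vertical $V$) involves $(\nabla_XT)_VV$ and $|T_VX|^2$ with no sign control. You cannot read off a contradiction from positive curvature by staring at individual mixed planes.

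The paper bypasses this entirely. The observation you have already made---that the horizontal distribution of the Riemannian submersion $\pi:M\to M/G\simeq\cM$ is integrable with totally geodesic leaves $g\cM$---says precisely that $\pi$ is a \emph{flat} Riemannian submersion (vanishing $A$-tensor). At this point the paper invokes Walschap's theorem \cite[Theorem~1.3]{Wa}: a flat Riemannian foliation in a non-negatively curved manifold locally splits isometrically. A local metric product with both factors positive-dimensional cannot have strictly positive sectional curvature, so $\dim\cM=0$ and $M=G/H$. This single citation replaces your entire case split; the Frankel step, while correct, is unnecessary.
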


\begin{proof}
If $N(H)/H = \{1\}$ we know from Theorem \ref{t:0} that $M$ is $G$-equivalent to
$\cM \times G/H$. Moreover, the projection $M \simeq \cM \times G/H \to \cM
\simeq M/G$ is a flat Riemannian submersion, i.e., has trivial integrability
tensor ($G$-translates of $\cM$ are integral submanifolds of the horizontal
distribution). Since a flat Riemannian foliation in a non-negatively curved
manifold locally splits isometrically by \cite[Theorem 1.3]{Wa}, this is
impossible in positive curvature unless $\cM$ is a point, i.e., $M = G/H$ is
homogeneous. 
\end{proof}

For calculations of $N(H)/H)$ when $G$ is $1$-connected, see \cite{Sh}, where it is called the generalized Weyl group.
The description of $G$-manifolds whose core group $\cG = N(H)/H$ is finite is
considerably more complicated than Theorem \ref {t:1}
. These are, however, special
cases of so-called polar manifolds, see \cite{PT}. For a detailed analysis of polar
manifolds we refer to \cite {GZ}. Here, however, we point out that the
arguments of Theorem \ref {t:1} above can be pushed to yield the following
extension:

\begin{theorem}
\label{t:2}
Let $M$ be a closed manifold of positive curvature, and $G$ a compact Lie
group of isometries on $M$, with principal isotropy group $H \subset G$. If
the core group $\cG = N(H)/H$ is finite then either 

\no $(a)$ $M = G/H$ , or

\no $(b)$ There are singular orbits $Gx$ on $M$, i.e.., there are points $x \in
M - M_o$ with dim$G_x >$ dim$H$.
\end{theorem}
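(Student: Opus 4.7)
The approach extends the proof of Theorem~\ref{t:1} from the case of trivial core group to finite core group. Assume $\cG = N(H)/H$ is finite and that every $G$-orbit has the principal dimension $\dim G/H$ (i.e.\ no singular orbits); I aim to conclude $M = G/H$. The first step is to upgrade the core $\cM$ to a totally geodesic \emph{section} of the $G$-action. Since $\dim \cG = 0$, Proposition~\ref{p.7} gives $\dim \cM = \dim M/G$, and for any $x \in \cM$ the intersection $Gx \cap M^H = N(H) \cdot x$ is zero-dimensional because $\cG$ is finite; hence $T_x \cM$ lies in the normal space to $Gx$, and a dimension count forces equality. Thus $\cM$ meets every $G$-orbit orthogonally, and by equivariance so does each translate $g \cdot \cM$.

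Next I would consider the foliation of the regular part $M_o$ by the family $\{g \cdot \cM \cap M_o\}_{g \in G}$. Its leaves are the integral submanifolds of the horizontal distribution of the orbit submersion $\pi : M_o \to M_o/G$, so $\pi$ is a flat Riemannian submersion: its integrability ($A$-)tensor vanishes. Invoking Walschap's theorem~\cite[Theorem 1.3]{Wa} exactly as in the proof of Theorem~\ref{t:1}, this flat Riemannian foliation in the nonnegatively curved $M$ forces $M$ to split locally as an isometric Riemannian product of a leaf and a fiber. Since $M$ has strictly positive sectional curvature, such a local product splitting requires one of the two factors to be a point. The fiber factor is the orbit $G/H$, which cannot be a point (else the effective action of $G$ would be trivial); therefore the leaf factor $\cM$ must be zero-dimensional. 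Because $\cM/\cG \simeq M/G$ (Proposition~\ref{p.9}) is then also zero-dimensional and $M$ is connected, $M$ consists of a single orbit $G/H$, yielding conclusion $(a)$.

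The main obstacle is the first step---verifying that the section property genuinely holds on all of $\cM$ rather than only on the open dense subset $(M_o)^H$. The hypothesis ``no singular orbits'' enters here decisively: it guarantees $\dim Gx = \dim G/H$ uniformly on $\cM$, so the orthogonal decomposition $T_x M = T_x(Gx) \oplus T_x \cM$ persists at every point of $\cM$, exceptional orbits included. In the presence of singular orbits the $G$-orbit would drop in dimension along $\cM$, $\cM$ would acquire extra tangent directions along the orbit, and the horizontal-section foliation would collapse on the singular stratum, blocking both the flat-submersion property and Walschap's theorem; this failure mode is precisely what case~(b) allows.
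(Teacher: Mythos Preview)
Your proposal is correct and follows essentially the same route as the paper: assume no singular orbits and $\cG$ finite, observe that the $G$-orbits then form a Riemannian foliation of $M$ whose horizontal distribution is integrable (the translates of $\cM$ being the integral leaves), and invoke Walschap's local splitting result \cite{Wa} to conclude that positive curvature forces $\cM$ to be a point, i.e.\ $M=G/H$. The paper's proof is simply a terser version of yours, phrasing the flatness directly for the orbit foliation on all of $M$ rather than first restricting to $M_o$.
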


\begin{proof}
Suppose dim$G_x =$ dim$H$ for all $x \in M$ and that $\cG$ is finite. The
first assumption implies that the $G$-orbits on $M$ define a Riemannian
foliation of $M$. The second assumption implies that this foliation is flat,
i.e., has integrable horizontal distribution as in the proof of Theorem \ref{t:1}.
As in Theorem \ref {t:1} we get from \cite{Wa} that $\cM$ must be a point if $M$
has positive curvature, hence $M = G/H$.
\end{proof}

The following immediate corollary played an important role in \cite[Theorem
B]{GS}.

\begin{corollary}[Fixed point lemma]
\label{c:3}
Let $M$ be a closed manifold of positive curvature and $G$ a compact
connected Lie group of isometries on $M$. If the identity component $H_o$ of
the principal isotropy group $H$ is a non-trivial maximal connected subgroup
of $G$, then either

\no $(a)$ $M = G/H$ , or

\no $(b)$  $M^G \ne \emptyset$.
\end{corollary}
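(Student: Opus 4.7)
The plan is to apply \tref{t:2} after first verifying that the core group $\cG = N(H)/H$ is finite under the maximality hypothesis, and then to promote the existence of a singular orbit in case (b) of \tref{t:2} to the existence of a genuine $G$-fixed point.

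First I would analyze $N(H)_o$. Since $H_o$ is a connected subgroup of the connected group $N(H)_o$, the assumption that $H_o$ is a maximal connected subgroup of $G$ forces the dichotomy $N(H)_o = H_o$ or $N(H)_o = G$. The latter would give $N(H) = G$ (as $G$ is connected), so $H$ and hence $H_o$ would be normal in $G$. But then for every $p \in M$ the slice theorem gives $G_p \supset gHg^{-1} = H \supset H_o$, so $H_o$ would lie in the ineffective kernel and hence be trivial by effectiveness, contradicting the non-triviality hypothesis. Therefore $N(H)_o = H_o$, and dividing by $H$ shows $\cG = N(H)/H$ is finite.

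Next I would apply \tref{t:2}: either $M = G/H$, giving conclusion (a), or there is a singular orbit $Gx$ with $\dim G_x > \dim H$. In the singular case, $G_x$ contains some conjugate $gHg^{-1}$, so $(G_x)_o$ contains $gH_og^{-1}$ strictly (by the dimension gap). Now $gH_og^{-1}$ is, like $H_o$, a maximal connected subgroup of $G$, so the strict containment $gH_og^{-1} \subsetneq (G_x)_o \subset G$ forces $(G_x)_o = G$, whence $G_x = G$ and $x \in M^G$, which is conclusion (b).

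The main technical point — and the only place where real work happens — is ruling out $N(H)_o = G$ via effectiveness; once that is done, \tref{t:2} supplies the positive-curvature input, and the maximality hypothesis is used a second, lighter time to convert a singular orbit into a fixed orbit.
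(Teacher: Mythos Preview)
Your proof is correct and follows the paper's approach: verify that $\cG = N(H)/H$ is finite via the maximality-plus-effectiveness argument, then invoke \tref{t:2}. The paper's own proof is extremely terse---it only writes out the first step (``otherwise $\dim N(H) = \dim G$ and hence $M^H = M$'') and leaves both the application of \tref{t:2} and the passage from ``singular orbit'' to ``$G$-fixed point'' entirely to the reader; your second use of maximality to promote $(G_x)_o \supsetneq gH_og^{-1}$ to $(G_x)_o = G$ makes that implicit step explicit and is exactly what is needed.
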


\begin{proof}
Since $H_o$ is maximal we see that $N(H)/H$ is finite. Otherwise, dim$N(H)$ =
dim$G$ and hence $M^H = M$.
\end{proof}

We conclude this section by pointing out that part of the essence of the core
group and manifold is, that it reduces many general questions about group
actions to those that have trivial principal isotropy group. Some of the core
constructions generalize to other types of "reductions" when replacing
$(M_o)^H$ by $(M_o)^L$ for subgroups $L \subset H$. Among all these
reductions, the core, $\cM$ is the one reduced the most. This is the reason
for choosing the word "core" for this most basic reduction. We will not
pursue the more general reductions further in this note.

\section{The core-resolution construction}

With the construction of $M_o$ in (1.5) as guideline, we will construct
a new $G$-manifold, $\cM$ which maps onto $M$ but is less singular as a
$G$-manifold, i.e., has smaller isotropy groups.

The $G$-action on $M$ induces a smooth surjective map
\begin{equation*}
\tag{2.1}
F: \cM \times G/H \to M , (x,[g']) \to g'x
\end{equation*}

\no extending the map in (1.4). This is $G$-equivariant when $G$ acts
trivially on the $\cM$-factor, and by left translations on $G/H$. Moreover,
it is $\cG$-invariant relative to the obvious $\cG$-extension to $\cM \times
G/H$ of (1.3). Thus $F$ induces a surjective $G$-equivariant map
\begin{equation*}
\tag{2.2}
f: (\cM \times G/H)/\cG \to M , \cG(x,[g']) \to g'x.
\end{equation*}

\no Since the natural $\cG = N(H)/H$-action
\begin{equation*}
\tag{2.3}
G/H \times \cG \to G/H , ([g'],[n]) \to [g'n]
\end{equation*}

\no is free, $\rM := (\cM \times G/H)/\cG = G/H \times _{\cG}\cM$ is a
smooth manifold. We will refer to $\rM$ as the (core-) \textit{resolution} of $M$.
Note that we can view $\rM$ as a bundle over $(G/H)/\cG = G/N(H)$ with
fiber $\cM$ associated to the principal $\cG$-bundle $G/H \to G/N(H)$. This
bundle map
\begin{equation*}
\tag{2.4}
\rM \to G/N(H) , (x,[g']) \cG \to [g'] \cG =:(g'),
\end{equation*}

\no where $(g') = g'N(H)$, is clearly $G$-equivariant.
We will now analyze the $G$-manifold $\rM$ and the map $f$ in \eqref{2.2} in
more detail.

\begin{theorem}
\label{2.5}
Let $M$ be a $G$-manifold with resolution $f: \rM \to M$ as in \eqref{2.2}, \\and 
$F: \cM \times G/H \to M$ as in (2.1). Then

\no $(1)$ $f: \rM_o \to M_o$ is a $G$-diffeomorphism

\no $(2)$ $f$ restricts to a $\cG$-diffeomorphism between the cores of $\rM$ and
of $M$

\no $(3)$ $f/G: \rM/G \to M/G$ is a homeomorphism.

\no $(4)$ $G_{(x,[1])\cG} = N(H) \cap G_x = N^{G_x}(H), (x,[1]) \in \cM
\times G/H$.

\no $(5)$ $DF_{(x,[1])}: T_x(\cM) \times T_{[1]}G/H \to T_xM$ is surjective if
and only if (*) $T_x(\cM) + T_xGx = T_xM$.

\no $(6)$ The condition (*) is equivalent to $G_x \subset N(H)$ as well as to
$(G_x)_o = (N(H) \cap G_x)_o$,
and $f: \rM \to M$ is a $G$-diffeomorphism if this condition holds for all $x
\in \cM$.
\end{theorem}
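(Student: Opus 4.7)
The plan is to verify the six assertions in turn. Parts (1)--(4) amount to bookkeeping with the definitions, (5) is a one-line tangent calculation, and the substance sits in (6).

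For (1), note via (4) that the isotropy at $(x,[1])\cG$ with $x\in(\cM)_o=M_o^H$ is $N(H)\cap H=H$, so $H$ is again the principal isotropy on $\rM$, and on $\rM_o=(M_o^H\times G/H)/\cG$ the map $f$ becomes exactly the $G$-diffeomorphism (1.5). For (2), the $\cG$-stabilizer of $(x,[1])\in\cM\times G/H$ is trivial, so $x\mapsto(x,[1])\cG$ embeds $\cM$ as a submanifold of $\rM$ on which $f$ is the identity; taking closures of $(\rM_o)^H$ inside this copy identifies it with the core of $\rM$, with $\cG$-equivariance built in. For (3), every $G$-orbit in $\rM$ meets the embedded core (act by $(g')^{-1}$ to move $(x,[g'])\cG$ to $(x,[1])\cG$), and two core points lie in the same $G$-orbit iff they are $\cG$-related in $\cM$, giving $\rM/G\cong\cM/\cG\cong M/G$ by \pref{p.9}. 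For (4), $g\in G$ fixes $(x,[1])\cG$ iff $(x,[g])=(n^{-1}x,[n])$ for some $[n]\in\cG$, forcing $n\in N(H)\cap G_x$ and $g\in nH\subset N(H)\cap G_x$ (using $H\subset G_x$ because $\cM\subset M^H$). For (5), differentiating $F(x,[g'])=g'x$ at $(x,[1])$ along the $\cM$-factor yields $T_x\cM$, and along the $G/H$-factor yields the fundamental vector fields $\{X^*_x:X\in\fg\}=T_xGx$.

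The heart is (6). Decompose $T_xM=T_xGx\oplus\nu_x$ orthogonally with $\nu_x$ the normal slice at $x$. By \pref{p.7}, $\cM$ is a union of components of $M^H$, so $T_x\cM=(T_xM)^H=(T_xGx)^H\oplus\nu_x^H$, which reduces (*) to $\nu_x^H=\nu_x$, i.e., to $H$ acting trivially on $\nu_x$. Let $K\triangleleft G_x$ denote the kernel of the slice representation. The crucial observation is $K\subset H$: any $k\in K$ fixes $\exp_x(\nu_x)$ pointwise, hence fixes some principal point $y$ of the slice, forcing $k\in G_y=H$. Conversely, if $G_x\subset N(H)$ then every principal point $v\in\nu_x$ has $(G_x)_v=H$ (the only $G_x$-conjugate of $H$ is $H$ itself), so the dense principal stratum lies in $\nu_x^H$ and hence $\nu_x^H=\nu_x$. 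Thus (*) iff $K=H$ iff $H\triangleleft G_x$ iff $G_x\subset N(H)$; the equivalence with $(G_x)_o=(N(H)\cap G_x)_o$ is the identity-component version of the same statement. Finally, if (*) holds at every $x\in\cM$, then by $G$-equivariance $DF$ is everywhere surjective and $f$ is a submersion; a dimension count $\dim\rM=\dim M/G+\dim G/H=\dim M$ (via \pref{p.7}) makes $f$ a local diffeomorphism, and bijectivity from (3) together with $G_{(x,[1])\cG}=G_x$ in (4) promotes it to a $G$-diffeomorphism.

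The main obstacle is the slice analysis giving $K\subset H$, which rests on the principal-isotropy characterization and the effectiveness of the ambient action; everything else reduces to linear algebra and equivariant bookkeeping.
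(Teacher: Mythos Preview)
Your argument tracks the paper's proof closely and is essentially correct; the organization via the kernel $K$ of the slice representation in (6) is a clean variant of the paper's reasoning (the paper instead observes directly that $(*)$ is equivalent to $T_x(Gx)^\perp\subset T_x\cM$, i.e.\ to $H$ acting trivially on the normal slice, hence to $H\triangleleft G_x$). Your endgame for (6), deducing bijectivity of $f$ from (3) and the isotropy match in (4), is a legitimate alternative to the paper's covering-map argument via (1).

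There is, however, one genuine gap. In (6) you assert that the equivalence of $(*)$ with $(G_x)_o=(N(H)\cap G_x)_o$ is ``the identity-component version of the same statement.'' This is not a proof: one direction is trivial (if $G_x\subset N(H)$ then certainly $(G_x)_o=(N(H)\cap G_x)_o$), but the converse---that $(G_x)_o\subset N(H)$ forces the \emph{disconnected} part of $G_x$ to normalize $H$ as well---is not a formal consequence of your $K$-argument and is not obvious group-theoretically. The paper establishes this equivalence by an independent dimension count: using $T_x\cM\cap T_xGx=(T_xGx)^H=T_x({}_cGx)$ and \pref{p.7}, condition $(*)$ becomes
\[
\dim\cM+\dim Gx-\dim{}_cGx=\dim M,
\]
which unwinds (via $\dim\cM=\dim M/G+\dim\cG$ and $\dim{}_cGx=\dim\cG-\dim(N(H)\cap G_x)+\dim H$) to $\dim(N(H)\cap G_x)=\dim G_x$, i.e.\ exactly $(G_x)_o=(N(H)\cap G_x)_o$. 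You should either supply this computation or give a direct argument for why $(G_x)_o\subset N(H)$ forces $H$ to act trivially on the full slice $\nu_x$.

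A small additional remark on (1): your identification $\rM_o=(M_o^H\times G/H)/\cG$ tacitly uses the nontrivial direction that $N(H)\cap G_x=H$ forces $x\in M_o^H$. This is exactly \pref{p.9} (equivalently \lref{l:8} applied to the slice), which you do cite via $(\cM)_o=M_o^H$; it would be worth making the inference explicit.
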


\begin{proof}
We prove (4) first: Let $x \in \cM \subset M$. Then
\begin{align*}
G_{(x,[1])\cG} &= \{g \mid g(x,[1])\cG = (x,[g])\cG = (x,[1])\cG\} \\
&= \{g \mid \exists n \in N(H): (x,[g]) = (n^{-1}x,[n])\} \\
&= \{g \mid \exists n \in N(H) \cap G_x: gH = nH\} \\
&= \{g \mid \exists n \in N(H) \cap G_x: g \in nH\} \\
&= N(H) \cap G_x = N ^{G_x}(H).
\end{align*}

\no This together with Lemma \ref{l:8} (cf. also Proposition \ref{p.9}) shows that for $x \in
\cM$, $(x,[1])\cG \in \rM_o$ if and only if $x \in \cM_o = M_o^H$. In other
words $\rM_o = (M_o^H \times G/H)/\cG \simeq M_o$.  As for (2) first note that
$(\rM_o)^H \simeq (M_o^H \times G/H)/\cG \simeq M_o^H$ and cl$(\rM_o)^H = (\cM
\times N(H)/H)/\cG \simeq \cM$. In particular, $f$ restricts to a
$\cG$-diffeomorphism from the core of $\rM$ to the core of $M$.  We get (3)
as an immediate consequence of (2) and Proposition \ref{p.9}. Now, for $x \in \cM$ and
$v \in T_x(\cM)$, $DF_{(x,,[1])}(v,0) = v \in T_x(\cM) \subset T_xM$. Moreover,
for $X \in T_{[1]}G/H \simeq \fh^{\perp}$ we have $DF_{(x,[1])}(0,X) = X^*(x)$,
where $X^*$ denotes the action field on $M$ corresponding to $X \in \fg \simeq
T_1G$. In particular, $DF(T_{(x,[1])}\cM \times G/H) = T_x(\cM) + T_xGx$ and (5)
is proved.  Since $T_x(\cM) \cap T_xGx = (T_xGx)^H$ and its complement in
$T_xGx$ is perpendicular to $T_x(\cM)$, we see that the condition (*) in (5) is
equivalent to the condition $T_xGx^{\perp} \subset T_x(\cM)$. This on the other
hand says that $H$ acts trivially on $T_xGx^{\perp}$, i.e., $H$ is normal in
$G_x$. The condition (*) is also equivalent to the condition 
$$ \text{dim}\cM + (\text{dim}Gx - \text{dim}{}_cGx) = \text{dim}M $$  
by Proposition \ref{p.9}. The left hand side can be written as \\ \\
\indent{dim$M_o^H$ + (dim$G$ - dim$G_x$ - dim$\cG$ + dim$\cG_x$) =} \\
\indent{dim$M/G$ + dim$\cG$ + (dim$G$ - dim$G_x$ - dim$\cG$ + dim$G_x \cap N(H)$ -
dim$H$) =} \\
\indent{dim$M$ + dim$G_x \cap N(H)$ - dim$G_x$, and dim$G_x \cap N(H)$ = dim$G_x$} \\ \\
if and
only if
$(G_x)_o = (G_x \cap N(H))_o$. By $G$-equivariance, $F$ and hence $f$ is a
submersion if and only if $DF_{(x,[1])}$ is surjective for all $x \in \cM$. In
this case, $f$ is a covering map and thus a diffeomorphism by (1). This
completes the proof of (6).
\end{proof}

\begin{remark}
\label{2.6}
From this theorem we see in particular that $\rM/G = M/G$. Moreover, $\rM = M$
if and only if $G_x \subset N(H)$ (or equivalently $(G_x)_o = (N(H) \cap G_x)_o)$
for all $x \in \cM$ and hence ${}^r(\rM) = \rM$. Among all $G$-manifolds with
the same principal isotropy group and the same core $(\cM, \cG)$, their common
resolution is the least singular $G$-manifold. Also if $M_1$ and $M_2$ are two
$n$-dimensional $G$-manifolds with $\rM_1 = \rM_2$ as $G$-manifolds, then $\cM_1
= \cM_2$ as $\cG$-manifolds. Note however, that $(\cM_1, \cG) = (\cM_2, \cG)$
does not imply that $(\rM_1, G) = (\rM_2, G)$ as the following example shows.
\end{remark}

\begin{example}
\label{2.7}
If $M = G/H$ is homogeneous, clearly $\rM = M$. In this case, the core $\cM =
(G/H)^H = N(H)/H \simeq \cG$ acting through right translations on itself. In
particular if $M_i = G/H_i, i = 1,2$ and $N(H_1)/H_1 = N(H_2)/H_2$ then $(\cM_1,
\cG) = (\cM_2, \cG)$ but $\rM_1 = M_1 \ne M_2 = \rM_2$ if $G/H_1 \ne G/H_2$. To
be explicit, such examples can be found among the Aloff-Wallach examples
\cite{AW}, $M_{p,q} = SU(3)/S^1_{p,q}$. When $(p,q) \ne (1,1)$ and gcd$(p,q) =1$,
then $N(S^1_{p,q})/S^1_{p,q} = S^1$ (cf. e.g. \cite{Sh}).
\end{example}

\begin{corollary}
\label{2.8}
If $M$ is a $G$-manifold without singular orbits then $\rM = M$.
\end{corollary}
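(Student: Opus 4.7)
The plan is to reduce the corollary directly to Theorem \ref{2.5}(6), which asserts that $f:\rM\to M$ is a $G$-diffeomorphism provided the condition $(G_x)_o=(N(H)\cap G_x)_o$ holds for every $x\in\cM$. So the whole task is to verify this identity-component condition under the assumption that there are no singular orbits.

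First I would unpack the hypothesis. ``No singular orbits'' means that every orbit $Gx$ has the same dimension as the principal orbit $G/H$, equivalently $\dim G_x=\dim H$ for every $x\in M$. In particular this holds for every $x\in\cM$.

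Next I would use the fact that $\cM\subset M^H$ (by Proposition \ref{p.7}), which gives $H\subset G_x$ for every $x\in\cM$. Combined with $\dim G_x=\dim H$ this forces $(G_x)_o=H_o$. Now since $H$ is itself contained in $N(H)$, one has the sandwich
\begin{equation*}
H\;\subset\;N(H)\cap G_x\;\subset\;G_x,
\end{equation*}
and passing to identity components yields $H_o\subset (N(H)\cap G_x)_o\subset (G_x)_o=H_o$, so all three identity components coincide. This is exactly the condition required in part (6) of Theorem \ref{2.5}.

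Applying Theorem \ref{2.5}(6) then immediately gives that $f\colon\rM\to M$ is a $G$-diffeomorphism, i.e.\ $\rM=M$ as $G$-manifolds. I do not anticipate a serious obstacle here: the entire argument rests on the simple observation that on the core $\cM$ the principal isotropy $H$ already sits inside $G_x$, so the dimension hypothesis collapses $G_x$ onto $H$ up to components; the ``hardest'' step is really just recognizing which clause of Theorem \ref{2.5} to invoke.
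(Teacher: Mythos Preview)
Your proposal is correct and follows essentially the same route as the paper: both arguments observe that $\dim G_x=\dim H$ together with $H\subset G_x$ (for $x\in\cM$) forces $(G_x)_o=H_o$, then use the sandwich $H_o\subset (N(H)\cap G_x)_o\subset (G_x)_o=H_o$ to verify the identity-component criterion in Theorem~\ref{2.5}(6). The paper inserts one extra intermediate term $N(H)_o\cap (G_x)_o$ in the chain, but this is not needed and your version is marginally cleaner.
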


\begin{proof}
By assumption dim$G_x$ = dim$H$ for all $x \in M$. But for $x \in \cM$ we have
$H \subset N(H) \cap G_x$ and hence $H_o \subset (N(H) \cap G_x)_o \subset
N(H)_o \cap (G_x)_o \subset (G_x)_o = H_o$, and the claim follows from
Theorem \eqref{2.5}.
\end{proof}

For the following cf. also  the example $(M,G) = (S^n,SO(n))$
where $S^n =$ \\
$ \Sigma S^{n-1} = \Sigma SO(n)/SO(n-1)$, and Theorem \ref{t:0}.

\begin{corollary}
\label{2.9}
Let $M$ be a 1-connected $G$-manifold with principal isotropy group $H$. If
there are no singular orbits in $M$, and $N(H)/H$ is finite, then all orbits are
principal.
\end{corollary}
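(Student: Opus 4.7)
The plan is to apply Corollary \ref{2.8} to reduce to the case $\rM = M$, then combine the 1-connectedness of $M$ with a covering-space argument to show that the $\cG$-action on $\cM$ is free. Remark \ref{2.6} together with the isotropy description in Theorem \ref{2.5}(4) will then force $G_x = H$ for every $x \in \cM$.

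Since there are no singular orbits, Corollary \ref{2.8} gives $\rM = M$, so $\rM$ is 1-connected. Consider the quotient map
$$ p \colon \cM \times G/H \longrightarrow \rM = (\cM \times G/H)/\cG. $$
Because $\cG$ acts freely on $G/H$, the diagonal $\cG$-action on $\cM \times G/H$ is free, so $p$ is a covering of degree $|\cG|$ (finite by hypothesis). Since $\rM$ is simply connected, every covering of $\rM$ is trivial: each connected component of $\cM \times G/H$ maps diffeomorphically onto $\rM$, so $\cM \times G/H$ has exactly $|\cG|$ components. Assuming $G$ is connected so that $G/H$ is connected, $\cM$ itself has exactly $|\cG|$ components.

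Since $\cM/\cG \cong M/G$ is connected, $\cG$ permutes the components of $\cM$ transitively, and having $|\cG|$ of them makes the action simply transitive. For any $x \in \cM$ the isotropy $(\cG)_x$ must fix the component of $x$, and is therefore trivial. By Remark \ref{2.6}, $\rM = M$ gives $G_x \subset N(H)$ for all $x \in \cM$, so $(\cG)_x = G_x/H$; combined with $(\cG)_x = \{1\}$ this yields $G_x = H$. Since every $G$-orbit meets $\cM$ by Proposition \ref{p.9}, every orbit is principal.

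The essential step is the splitting of the $\cG$-cover $\cM \times G/H \to \rM$ into trivial sheets, which is where 1-connectedness of $M$ enters; the rest is bookkeeping using Theorem \ref{2.5} and Remark \ref{2.6}. Connectedness of $G/H$ is then used to transfer the component count from the product to $\cM$; without the tacit assumption that $G$ is connected, one would still have $c(\cM) \cdot c(G/H) = |\cG|$ but would need further analysis of how $\cG$ permutes components of $G/H$ in order to deduce triviality of the isotropy on $\cM$.
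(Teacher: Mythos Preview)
Your proof is correct and follows essentially the same approach as the paper: both apply Corollary~\ref{2.8} to identify $\rM=M$, observe that $\cM\times G/H\to M$ is a finite $\cG$-cover, and use simple connectivity of $M$ to split this cover and conclude that $\cG$ permutes the components of $\cM$ simply transitively (the paper phrases this as ``$\cM\simeq\cG\times N$''). Your write-up is in fact more explicit than the paper's, and you correctly flag the tacit hypothesis that $G/H$ be connected, which the paper's proof also uses without comment.
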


\begin{proof}
From Corollary \ref{2.8} and $\cG = N(H)/H$ finite we see that $\cM \times G/H \to \rM =
M$ is a finite cover. Since $M$ is simply connected this is a trivial
$\cG$-bundle, i.e., $\cM \simeq \cG \times N$, where $N$ is a connected
component of $\cM$, and $M \simeq (\cG \times N) \times _{\cG} G/H \simeq N
\times G/H$ as $G$-manifolds.
\end{proof}

Let us now turn to metric properties of the core resolution. - If $M$ is a
Riemannian $G$-manifold, it is natural to equip the totally geodesic core $\cM$
with the induced Riemannian metric. It is also natural to endow $G/H$ with a
metric induced from a biinvariant on $G$. In this case, the product metric on
$\cM \times G/H$ will be invariant under both the $G$-action and the
$\cG$-action. By the Gray-O'Neill curvature submersion formula (cf. \cite{ON} or
\cite{G}), we get the following interesting fact:

\begin{proposition}
\label{2.10}
The resolution $\rM$ of any Riemannian $G$-manifold $M$ with sectional
curvature, sec$M \ge k, k \le 0$ supports a $G$-invariant Riemannian metric with
sec$\rM \ge k$.
\end{proposition}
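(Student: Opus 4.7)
The plan is to construct the metric on $\rM$ as a Riemannian submersion quotient of a product of well-understood pieces, and to track the curvature lower bound through each stage. First I would equip $\cM$ with the Riemannian metric induced from $M$. By \pref{p.7}, $\cM$ is a disjoint union of components of $M^H$, which is totally geodesic in $M$; the Gauss equation therefore yields $\sec\cM \ge \sec M \ge k$ pointwise. Next, fix a biinvariant metric on $G$ and give $G/H$ the resulting normal homogeneous metric, for which the standard Gray--O'Neill argument on $G\to G/H$ gives $\sec(G/H)\ge 0\ge k$.

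Then I would put the product metric on $\cM \times G/H$. The $\cG$-action of (1.3), combining the restriction of the isometric $G$-action on $M$ with a right translation on $G/H$, acts by isometries, and is free because it is already free on the $G/H$-factor (see (2.3)). The $G$-action of (2.1), trivial on $\cM$ and by left translation on $G/H$, is also isometric and commutes with $\cG$. Hence $\rM=(\cM\times G/H)/\cG$ inherits a unique Riemannian metric for which the projection $\cM\times G/H\to \rM$ is a Riemannian submersion, and the residual $G$-action on $\rM$ is by isometries. To finish, the Gray--O'Neill submersion formula gives, for horizontal lifts $\tilde X,\tilde Y$ of an orthonormal pair $X,Y$ in $T\rM$,
\[
\sec\rM(X,Y) \;=\; \sec(\cM\times G/H)(\tilde X,\tilde Y) \;+\; \tfrac{3}{4}\bigl|[\tilde X,\tilde Y]^{V}\bigr|^{2},
\]
so it suffices to show $\sec(\cM\times G/H)\ge k$.

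The one non-formal point, and the main obstacle, is this product-curvature inequality. Decomposing an orthonormal pair $X=X_1+X_2$, $Y=Y_1+Y_2$ along the two factors and expanding $R=R_1\oplus R_2$, one gets $\langle R(X,Y)Y,X\rangle \ge k\,|X_1\wedge Y_1|^2 + 0$. A direct Cauchy--Schwarz / AM--GM expansion shows $|X\wedge Y|^2 \ge |X_1\wedge Y_1|^2$, hence $|X_1\wedge Y_1|^2\le 1$; since $k\le 0$, multiplying by $k$ reverses the inequality and gives $k\,|X_1\wedge Y_1|^2\ge k$. This is precisely the place where the hypothesis $k\le 0$ is essential: a mixed $2$-plane in the product always has vanishing curvature, so the method cannot reach a positive lower bound. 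With this inequality in hand, the O'Neill formula above immediately delivers $\sec\rM\ge k$.
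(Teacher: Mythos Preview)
Your argument is correct and is exactly the one the paper sketches: equip the totally geodesic core $\cM$ with the induced metric, $G/H$ with a normal homogeneous metric coming from a biinvariant metric on $G$, take the product metric (invariant under both the $G$- and $\cG$-actions), and apply the Gray--O'Neill formula to the free isometric $\cG$-quotient. Your explicit verification that $\sec(\cM\times G/H)\ge k$ via $|X_1\wedge Y_1|^2\le |X\wedge Y|^2$ and the sign of $k$ spells out precisely the point the paper leaves implicit, and correctly isolates why the hypothesis $k\le 0$ is needed.
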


\begin{remark}
\label{2.11}
This is particularly interesting for $k = 0$. For example, this provides a new
proof that e.g. $CP^n \# -CP^n$ has a metric with non-negative curvature,
first proved in \cite{C}. The point is that $CP^n \# -CP^n$ is the resolution,
${}^rS^{2n}$ of the $U(n)$-manifold $S^{2n}$, where the $U(n)$-action is the
suspension of the standard action on $S^{2n-1} = U(n)/U(n-1)$.
\end{remark}

From Corollary \ref{2.8} and the description $\cM \to \rM = \cM \times _{\cG}G/H \to
G/N(H)$ it is tempting to make the following conjecture.

\begin{conjecture}
\label{2.12}
Let $M$ be a positively curved $G$-manifold with principal isotropy group $H \ne
\{1\}$. Then either

\no $(a)$ $M = G/H$ , or

\no $(b)$ $\rM \ne M$. In particular there are singular $G$-orbits in $M$.
\end{conjecture}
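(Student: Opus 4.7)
The natural strategy is to extend the argument of \tref{t:1} to the setting of continuous $\cG$. Assume for contradiction that $\rM = M$ and $H \ne \{1\}$; the goal is to show $M = G/H$. By \tref{2.5}(6), the condition $\rM = M$ is equivalent to $G_x \subset N(H)$ for every $x \in \cM$, so the bundle map (2.4) becomes a genuine $G$-equivariant fibration $p\colon M \to G/N(H)$ whose fibers are the $G$-translates $g\cM$ of the core. Each fiber is totally geodesic in $M$ (as a union of components of $M^{gHg^{-1}}$), and these fibers assemble into a $G$-invariant foliation $\mathcal F$ by totally geodesic submanifolds. Note that $\cM \subsetneq M$ under our assumptions, since $\cM = M$ would put $H$ in the ineffective kernel, contradicting $H \ne \{1\}$.

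The key geometric input, as in \tref{t:1}, is the identification of the horizontal distribution $T\mathcal F^{\perp}$. The dimension count at the end of the proof of \tref{2.5}(6) gives
\begin{equation*}
T_x \cM^{\perp} \;=\; (T_x Gx)^{\perp H} \qquad (x \in \cM),
\end{equation*}
so horizontal vectors at $x$ are action fields $X^*(x)$ for $X$ in the orthogonal complement of the Lie algebra $\mathfrak n(\fh)$ of $N(H)$ in $\fg$ (with respect to an auxiliary bi-invariant metric), and in particular $T\mathcal F^{\perp}$ is pointwise tangent to the $G$-orbits. The plan would then be to show $\mathcal F$ is \emph{flat} (integrability tensor $A \equiv 0$) and, via \cite[Theorem 1.3]{Wa}, produce a local isometric splitting $M \cong \cM \times G/N(H)$; positive curvature would then force $\cM$ to collapse to a point, giving $M = G/H$.

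The main obstacle is that $\mathcal F$ is typically not flat when $\cG$ has positive dimension. A direct computation from $[X^*, Y^*] = -[X, Y]^*$ shows that for horizontal $X, Y$ the vertical component of $[X^*, Y^*](x)$ is $-([X, Y]_{\mathfrak n(\fh)})^*(x)$, which is essentially the curvature of the canonical connection on the principal $\cG$-bundle $G/H \to G/N(H)$. This vanishes exactly in the cases already covered by \tref{t:1} and \tref{t:2} (where $\cG$ is trivial or discrete) but is generically nonzero otherwise. Worse still, O'Neill's formula for a Riemannian submersion with totally geodesic fibers gives $K_M(X,V) \geq 0$ with equality iff $A_X V = 0$, so $\sec M > 0$ actually \emph{forces} $A$ to be nonzero on a dense set, and Walschap's theorem cannot be invoked verbatim.

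To handle the continuous $\cG$ case I would combine two complementary lines. First, $\cM$ is itself a closed positively curved totally geodesic $\cG$-manifold with trivial principal isotropy by \pref{p.9}, and singular $G$-orbits in $M$ correspond to singular $\cG$-orbits in $\cM$; an induction on $\dim M + \dim G$ using the constraint $(G_x)_o \subset N(H)$ for $x \in \cM$ should reduce matters to the already-treated cases of trivial or discrete $\cG$. Second, a global argument along the lines of Wilking's dual-foliation philosophy in positive curvature, applied to the totally geodesic foliation $\mathcal F$ together with the orbit-tangent horizontal distribution, should provide the additional rigidity needed to rule out continuous $\cG$. The hard part will be making either of these routes give quantitative control on $A$ strong enough to contradict $\sec M > 0$; this is where I expect genuinely new input (beyond the reduction/resolution machinery of the present note) will be required.
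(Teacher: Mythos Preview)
This statement is labeled \emph{Conjecture} in the paper, and the paper provides no proof; it merely motivates the statement from Corollary~\ref{2.8} and the bundle description $\rM = \cM \times_{\cG} G/H \to G/N(H)$, and then discusses why the hypothesis $H \ne \{1\}$ is necessary. So there is no ``paper's own proof'' to compare against.

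Your proposal is not a proof either, and you are candid about this. The opening reduction is fine: $\rM = M$ does give the totally geodesic foliation of $M$ by $G$-translates of $\cM$, with horizontal space tangent to the orbits. But, as you correctly observe, the step that carried Theorems~\ref{t:1} and~\ref{t:2} --- flatness of the horizontal distribution and the appeal to \cite{Wa} --- breaks down precisely when $\dim\cG > 0$, and in fact positive curvature forces the $A$-tensor to be nonzero. From that point on your text is a research outline (induction on $\dim M + \dim G$, dual-foliation ideas) rather than an argument, and you say explicitly that ``genuinely new input \ldots\ will be required.'' That is an honest assessment; it matches the status the paper assigns to the statement. In short: no gap to name beyond the one you already flag, but also no proof --- which is consistent with the paper leaving this open.
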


Note that the assumption $H \ne \{1\}$ is necessary since there are many linear
almost free actions of $S^1$ and of $S^3$ on spheres (cf. e.g. \cite{GG}). These
together with the transitive actions on spheres all have ${}^rS = S$. Note that if the
above conjecture is correct, then any (non finite) isometric $G$-action on a positively
curved manifold, $M$ has singular orbits, unless the action is transitive, or $G$
acts almost freely on $M$, in which case rk$G$ = 1 and dim$M$ is odd.

Another interesting question about the resolution construction is whether it
preserves the topological dichotomy into elliptic and hyperbolic types \cite{FHT}.
From its very construction this hinges on the question of whether the core can
change type or not.

\begin{problem}
\label{2.13}
Is $\rM$, or equivalently $\cM$ elliptic if and only if $M$ is ?
\end{problem}

In conclusion we also point out that "intermediate" resolutions can be
constructed by replacing the core by other reductions.

\providecommand{\bysame}{\leavevmode\hbox to3em{\hrulefill}\thinspace}

\end{document}